\begin{document}
%
%\renewcommand*{\backref}[1]{}
%\renewcommand*{\backrefalt}[4]{%
%    \ifcase #1 (Not cited.)%
%    \or        (p.\,#2)%
%    \else      (pp.\,#2)%
%    \fi}     
    
%%%%%%%%%%%%%%%%%%%%%%%%%%%%%%%%%%%%%%%%%%%%%%%%%%%%%%%%%%%%%%%%%%%%%%
\newtheorem{theorem}{Theorem}
\newtheorem{lemma}[theorem]{Lemma}
\newtheorem{example}[theorem]{Example}
\newtheorem{algol}{Algorithm}
\newtheorem{corollary}[theorem]{Corollary}
\newtheorem{prop}[theorem]{Proposition}
\newtheorem{proposition}[theorem]{Proposition}
\newtheorem{problem}[theorem]{Problem}
\newtheorem{conj}[theorem]{Conjecture}

\theoremstyle{remark}
\newtheorem{definition}[theorem]{Definition}
\newtheorem{question}[theorem]{Question}
\newtheorem{remark}[theorem]{Remark}
\newtheorem*{acknowledgement}{Acknowledgements}

\newtheorem*{Thm*}{Theorem}
\newtheorem{Thm}{Theorem}[section]
\renewcommand*{\theThm}{\Alph{Thm}}

\numberwithin{equation}{section}
\numberwithin{theorem}{section}
\numberwithin{table}{section}
\numberwithin{figure}{section}

\allowdisplaybreaks

%%%%%%%%%%%%%%%%%%%%%%%%%%%%%%%%%%%%%%%%%%%%%%%%%%%%%%%%%%%%%%%%%%%%%%
\definecolor{olive}{rgb}{0.3, 0.4, .1}
\definecolor{dgreen}{rgb}{0.,0.5,0.}

\def\cc#1{\textcolor{red}{#1}} 

\definecolor{dgreen}{rgb}{0.,0.6,0.}
\def\tgreen#1{\begin{color}{dgreen}{\it{#1}}\end{color}}
\def\tblue#1{\begin{color}{blue}{\it{#1}}\end{color}}
\def\tred#1{\begin{color}{red}#1\end{color}}
\def\tmagenta#1{\begin{color}{magenta}{\it{#1}}\end{color}}
\def\tNavyBlue#1{\begin{color}{NavyBlue}{\it{#1}}\end{color}}
\def\tMaroon#1{\begin{color}{Maroon}{\it{#1}}\end{color}}

%\newcommand{\commA}[1]{\marginpar{%
%\begin{color}{blue}
%\vskip-\baselineskip %raise the marginpar a bit
%\raggedright\footnotesize
%\itshape\hrule \smallskip AO: #1\par\smallskip\hrule\end{color}}\ignorespaces}
%
%\newcommand{\commI}[1]{\marginpar{%
%\begin{color}{magenta}
%\vskip-\baselineskip %raise the marginpar a bit
%\raggedright\footnotesize
%\itshape\hrule \smallskip IS: #1\par\smallskip\hrule\end{color}}\ignorespaces}

%%%%%%%%%%%%%%%%%%%%%%%%%%%%%%%%%%%%%%%%%%%%%%%%%%%%%%%%%%%%%%%%%%%%%%

%%%%%%%%%%%%%%%%%%%%%%%%%%%%%%%%%%%%%%%%%%%%%%%%%%%%%%%%%%%%%%%%%%%%%%
%% %% Macros that are not used or that were defined twice
%% \def\xxx{\vskip5pt\hrule\vskip5pt}
%% \def\Cmt#1{\underline{{\sl Comments:}} {\it{#1}}}
%% \newcommand{\Modp}[1]{
%% \begin{color}{blue}
%%  #1\end{color}}
%% \def\bl#1{\begin{color}{blue}#1\end{color}} 
%% \def\red#1{\begin{color}{red}#1\end{color}} 
%\newcommand{\eqname}[1]{\tag{#1}}% Tag equation with name
%% JS: It is an abomination to redefine \(,\),\[,\]. They are STANDARD TeX macros
%% for delineating equations!!!
%% \def\({\left(}
%% \def\){\right)}
%% \def\[{\left[}
%% \def\]{\right]}
%% \def\gen#1{{\left\langle#1\right\rangle}}
%% \def\genp#1{{\left\langle#1\right\rangle}_p}
%% \def\genPs{{\left\langle P_1, \ldots, P_s\right\rangle}}
%% \def\genPsp{{\left\langle P_1, \ldots, P_s\right\rangle}_p}
%% \def\eq{\e_q}
%% \def\fh{{\mathfrak h}}
%% \def\fl#1{\left\lfloor#1\right\rfloor}
%% \def\rf#1{\left\lceil#1\right\rceil}
 \def\mand{\qquad\mbox{and}\qquad}
%% \def\jt{\tilde\jmath}
%% \def\ellmax{\ell_{\rm max}}
%% \def\rank#1{\mathrm{rank}#1} 
%% \def\m{{\rm m}}
%% \def\ch{\hat{h}}
%% \def\GL{{\rm GL}}
%% \def\Orb{\operatorname{Orb}}
%% \def\Per{\operatorname{Per}}
%% \def\Preper{\operatorname{Preper}}
%% \def\PGL{\operatorname{PGL}}
%% \def\tors{\operatorname{tors}}
%% \def\Gal{{\operatorname{Gal}}}
%% \newcommand{\Ch}{{\operatorname{Ch}}}
%% \newcommand{\Elim}{{\operatorname{Elim}}}
%% \newcommand{\proj}{{\operatorname{proj}}}
%% \newcommand{\h}{{\operatorname{\mathrm{h}}}}
%% \newcommand{\hh}{\mathrm{h}}
%% \newcommand{\bfalpha}{{\boldsymbol{\alpha}}}
%% \newcommand{\bfomega}{{\boldsymbol{\omega}}}
%%%%%%%%%%%%%%%%%%%%%%%%%%%%%%%%%%%%%%%%%%%%%%%%%%%%%%%%%%%%%%%%%%%%%%

%%%%%%%%%%%%%%%%%%%%%%%%%
% Alphabet calligraphic %
%%%%%%%%%%%%%%%%%%%%%%%%%
\def\cA{{\mathcal A}}
\def\cB{{\mathcal B}}
\def\cC{{\mathcal C}}
\def\cD{{\mathcal D}}
\def\cI{{\mathcal I}}
\def\cJ{{\mathcal J}}
\def\cK{{\mathcal K}}
\def\cL{{\mathcal L}}
\def\cM{{\mathcal M}}
\def\cN{{\mathcal N}}
\def\cO{{\mathcal O}}
\def\cP{{\mathcal P}}
\def\cQ{{\mathcal Q}}
\def\cR{{\mathcal R}}
\def\cS{{\mathcal S}}
\def\cT{{\mathcal T}}
\def\cU{{\mathcal U}}
\def\cV{{\mathcal V}}
\def\cW{{\mathcal W}}
\def\cX{{\mathcal X}}
\def\cY{{\mathcal Y}}
\def\cZ{{\mathcal Z}}

\def\C{\mathbb{C}}
\def\K{\mathbb{K}}
\def\Z{\mathbb{Z}}
\def\R{\mathbb{R}}
\def\Q{\mathbb{Q}}
\def\N{\mathbb{N}}
\def\M{\mathrm{M}}
\def\L{\mathbb{L}}
\def\M{{\normalfont\textsf{M}}} %%% MODIFIED BY DRP
\def\U{\mathbb{U}}
\def\P{\mathbb{P}}
\def\A{\mathbb{A}}
\def\fp{\mathfrak{p}}
\def\fq{\mathfrak{q}}
\def\n{\mathfrak{n}}
\def\X{\mathcal{X}}
\def\x{\textrm{\bf x}}
\def\w{\textrm{\bf w}}
\def\ovQ{\overline{\Q}}
\def \Kab{\K^{\mathrm{ab}}}
\def \Qab{\Q^{\mathrm{ab}}}
\def \Qtr{\Q^{\mathrm{tr}}}
\def \Kc{\K^{\mathrm{c}}}
\def \Qc{\Q^{\mathrm{c}}}
\def\ZK{\Z_\K}
\def\ZKS{\Z_{\K,\cS}}
\def\ZKSf{\Z_{\K,\cS_{f}}}
\def\RSf{R_{\cS_{f}}}
\def\RTf{R_{\cT_{f}}}

\def\S{\mathcal{S}}
\def\vec#1{\mathbf{#1}}
\def\ov#1{{\overline{#1}}}
\def\sign{{\operatorname{sign}}}
\def\Gm{\G_{\textup{m}}}
\def\fA{{\mathfrak A}}
\def\fB{{\mathfrak B}}

\def \GL{\mathrm{GL}}
\def \Mat{\mathrm{Mat}}
\def \Tr{\mathrm{Tr}}

\def\house#1{{%
    \setbox0=\hbox{$#1$}
    \vrule height \dimexpr\ht0+1.4pt width .5pt depth \dp0\relax
    \vrule height \dimexpr\ht0+1.4pt width \dimexpr\wd0+2pt depth \dimexpr-\ht0-1pt\relax
    \llap{$#1$\kern1pt}
    \vrule height \dimexpr\ht0+1.4pt width .5pt depth \dp0\relax}}

%%%%%%%%%%%%%%%%%%%%%%%%%%%%%%%%%%%%%%%%%%%%%%%%%%%%%%%%%%%%%%%%%%%%%%

%%%%%%%% Set Up Environment for Notation %%%%%%%%%%%%%%
% This is currently set to allow quite wide items to be defined
\newenvironment{notation}[0]{%
  \begin{list}%
    {}%
    {\setlength{\itemindent}{0pt}
     \setlength{\labelwidth}{1\parindent}
     \setlength{\labelsep}{\parindent}
     \setlength{\leftmargin}{2\parindent}
     \setlength{\itemsep}{0pt}
     }%
   }%
  {\end{list}}

%%%%%%%% Set Up Environment for Parts in Theorems %%%%%%%%%%%%%%
\newenvironment{parts}[0]{%
  \begin{list}{}%
    {\setlength{\itemindent}{0pt}
     \setlength{\labelwidth}{1.5\parindent}
     \setlength{\labelsep}{.5\parindent}
     \setlength{\leftmargin}{2\parindent}
     \setlength{\itemsep}{0pt}
     }%
   }%
  {\end{list}}
% Use \Part{(a)}, instead of \item[(a)], to ensure upright font
\newcommand{\Part}[1]{\item[\upshape#1]}

%%%%%%%% Set Up Macro for Cases %%%%%%%%%%%%%%
\def\Case#1#2{%
\smallskip\paragraph{\textbf{\boldmath Case #1: #2.}}\hfil\break\ignorespaces}

\def\Subcase#1#2{%
\smallskip\paragraph{\textit{\boldmath Subcase #1: #2.}}\hfil\break\ignorespaces}

%%%%%%%%%%%%%%%%%%
% Greek Alphabet %
%%%%%%%%%%%%%%%%%%
\renewcommand{\a}{\alpha}
\renewcommand{\b}{\beta}
\newcommand{\g}{\gamma}
\renewcommand{\d}{\delta}
\newcommand{\e}{\epsilon}
\newcommand{\f}{\varphi}
\newcommand{\fhat}{\hat\varphi}
\newcommand{\bfphi}{{\boldsymbol{\f}}}
\renewcommand{\l}{\lambda}
\renewcommand{\k}{\kappa}
\newcommand{\lhat}{\hat\lambda}
\newcommand{\bfmu}{{\boldsymbol{\mu}}}
\renewcommand{\o}{\omega}
\renewcommand{\r}{\rho}
\newcommand{\rbar}{{\bar\rho}}
\newcommand{\s}{\sigma}
\newcommand{\sbar}{{\bar\sigma}}
\renewcommand{\t}{\tau}
\newcommand{\z}{\zeta}

%\newcommand{\D}{\Delta}
%\newcommand{\G}{\Gamma}
%\newcommand{\F}{\Phi}
%\renewcommand{\L}{\Lambda}

%%%%%%%%%%%%%%%%%%%%
% Fraktur Alphabet %
%%%%%%%%%%%%%%%%%%%%
\newcommand{\ga}{{\mathfrak{a}}}
\newcommand{\gb}{{\mathfrak{b}}}
\newcommand{\gn}{{\mathfrak{n}}}
\newcommand{\gp}{{\mathfrak{p}}}
\newcommand{\gP}{{\mathfrak{P}}}
\newcommand{\gq}{{\mathfrak{q}}}
\newcommand{\h}{{\mathfrak{h}}}
%%%%%%%%%%%%%%%%%%%
% Barred Alphabet %
%%%%%%%%%%%%%%%%%%%
\newcommand{\Abar}{{\bar A}}
\newcommand{\Ebar}{{\bar E}}
\newcommand{\kbar}{{\bar k}}
\newcommand{\Kbar}{{\bar K}}
\newcommand{\Pbar}{{\bar P}}
\newcommand{\Sbar}{{\bar S}}
\newcommand{\Tbar}{{\bar T}}
\newcommand{\gbar}{{\bar\gamma}}
\newcommand{\lbar}{{\bar\lambda}}
\newcommand{\ybar}{{\bar y}}
\newcommand{\phibar}{{\bar\f}}

%%%%%%%%%%%%%%%%%%%%%%%%%
% Calligraphic Alphabet %
%%%%%%%%%%%%%%%%%%%%%%%%%
\newcommand{\Acal}{{\mathcal A}}
\newcommand{\Bcal}{{\mathcal B}}
\newcommand{\Ccal}{{\mathcal C}}
\newcommand{\Dcal}{{\mathcal D}}
\newcommand{\Ecal}{{\mathcal E}}
\newcommand{\Fcal}{{\mathcal F}}
\newcommand{\Gcal}{{\mathcal G}}
\newcommand{\Hcal}{{\mathcal H}}
\newcommand{\Ical}{{\mathcal I}}
\newcommand{\Jcal}{{\mathcal J}}
\newcommand{\Kcal}{{\mathcal K}}
\newcommand{\Lcal}{{\mathcal L}}
\newcommand{\Mcal}{{\mathcal M}}
\newcommand{\Ncal}{{\mathcal N}}
\newcommand{\Ocal}{{\mathcal O}}
\newcommand{\Pcal}{{\mathcal P}}
\newcommand{\Qcal}{{\mathcal Q}}
\newcommand{\Rcal}{{\mathcal R}}
\newcommand{\Scal}{{\mathcal S}}
\newcommand{\Tcal}{{\mathcal T}}
\newcommand{\Ucal}{{\mathcal U}}
\newcommand{\Vcal}{{\mathcal V}}
\newcommand{\Wcal}{{\mathcal W}}
\newcommand{\Xcal}{{\mathcal X}}
\newcommand{\Ycal}{{\mathcal Y}}
\newcommand{\Zcal}{{\mathcal Z}}

%%%%%%%%%%%%%%%%%%%%%%%%%%%%
% Blackboard Bold Alphabet %
%%%%%%%%%%%%%%%%%%%%%%%%%%%%
\renewcommand{\AA}{\mathbb{A}}
\newcommand{\BB}{\mathbb{B}}
\newcommand{\CC}{\mathbb{C}}
\newcommand{\FF}{\mathbb{F}}
\newcommand{\G}{\mathbb{G}}
\newcommand{\KK}{\mathbb{K}}
\newcommand{\NN}{\mathbb{N}}
\newcommand{\PP}{\mathbb{P}}
\newcommand{\QQ}{\mathbb{Q}}
\newcommand{\RR}{\mathbb{R}}
\newcommand{\ZZ}{\mathbb{Z}}

%%%%%%%%%%%%%%%%%%%%%%%%%%
% Boldface Math Alphabet %
%%%%%%%%%%%%%%%%%%%%%%%%%%
\newcommand{\bfa}{{\boldsymbol a}}
\newcommand{\bfb}{{\boldsymbol b}}
\newcommand{\bfc}{{\boldsymbol c}}
\newcommand{\bfd}{{\boldsymbol d}}
\newcommand{\bfe}{{\boldsymbol e}}
\newcommand{\bff}{{\boldsymbol f}}
\newcommand{\bfg}{{\boldsymbol g}}
\newcommand{\bfi}{{\boldsymbol i}}
\newcommand{\bfj}{{\boldsymbol j}}
\newcommand{\bfk}{{\boldsymbol k}}
\newcommand{\bfm}{{\boldsymbol m}}
\newcommand{\bfp}{{\boldsymbol p}}
\newcommand{\bfr}{{\boldsymbol r}}
\newcommand{\bfs}{{\boldsymbol s}}
\newcommand{\bft}{{\boldsymbol t}}
\newcommand{\bfu}{{\boldsymbol u}}
\newcommand{\bfv}{{\boldsymbol v}}
\newcommand{\bfw}{{\boldsymbol w}}
\newcommand{\bfx}{{\boldsymbol x}}
\newcommand{\bfy}{{\boldsymbol y}}
\newcommand{\bfz}{{\boldsymbol z}}
\newcommand{\bfA}{{\boldsymbol A}}
\newcommand{\bfF}{{\boldsymbol F}}
\newcommand{\bfB}{{\boldsymbol B}}
\newcommand{\bfD}{{\boldsymbol D}}
\newcommand{\bfG}{{\boldsymbol G}}
\newcommand{\bfI}{{\boldsymbol I}}
\newcommand{\bfM}{{\boldsymbol M}}
\newcommand{\bfP}{{\boldsymbol P}}
\newcommand{\bfX}{{\boldsymbol X}}
\newcommand{\bfY}{{\boldsymbol Y}}
\newcommand{\bfzero}{{\boldsymbol{0}}}
\newcommand{\bfone}{{\boldsymbol{1}}}

%%%%%%%%%%%%%%%%%%%%%%%%%%%%%%
% Miscellaneous New Commands %
%%%%%%%%%%%%%%%%%%%%%%%%%%%%%%
\newcommand{\aff}{{\textup{aff}}}
\newcommand{\Aut}{\operatorname{Aut}}
\newcommand{\Berk}{{\textup{Berk}}}
\newcommand{\Birat}{\operatorname{Birat}}
\newcommand{\characteristic}{\operatorname{char}}
\newcommand{\codim}{\operatorname{codim}}
\newcommand{\Crit}{\operatorname{Crit}}
\newcommand{\critwt}{\operatorname{critwt}} % valency of a portrait
\newcommand{\cond}{\operatorname{cond}}
\newcommand{\Cycle}{\operatorname{Cycles}}
\newcommand{\diag}{\operatorname{diag}}
\newcommand{\Disc}{\operatorname{Disc}}
\newcommand{\Div}{\operatorname{Div}}
\newcommand{\Dom}{\operatorname{Dom}}
\newcommand{\End}{\operatorname{End}}
\newcommand{\ExtOrbit}{\mathcal{EO}} %% Extended orbit
\newcommand{\Fbar}{{\bar{F}}}
\newcommand{\Fix}{\operatorname{Fix}}
\newcommand{\FOD}{\operatorname{FOD}}
\newcommand{\FOM}{\operatorname{FOM}}
\newcommand{\Gal}{\operatorname{Gal}}
\newcommand{\genus}{\operatorname{genus}}
\newcommand{\GITQuot}{/\!/}
\newcommand{\GR}{\operatorname{\mathcal{G\!R}}}
\newcommand{\Hom}{\operatorname{Hom}}
\newcommand{\Index}{\operatorname{Index}}
\newcommand{\Image}{\operatorname{Image}}
\newcommand{\Isom}{\operatorname{Isom}}
\newcommand{\hhat}{{\hat h}}
\newcommand{\Ker}{{\operatorname{ker}}}
\newcommand{\Ksep}{K^{\textup{sep}}}  %% separable closure of K
\newcommand{\lcm}{{\operatorname{lcm}}}
\newcommand{\LCM}{{\operatorname{LCM}}}
\newcommand{\Lift}{\operatorname{Lift}}
\newcommand{\limstar}{\lim\nolimits^*}
\newcommand{\limstarn}{\lim_{\hidewidth n\to\infty\hidewidth}{\!}^*{\,}}
\newcommand{\llog}{\log\log}
\newcommand{\logplus}{\log^{\scriptscriptstyle+}}
\newcommand{\maxplus}{\operatornamewithlimits{\textup{max}^{\scriptscriptstyle+}}}
\newcommand{\MOD}[1]{~(\textup{mod}~#1)}
\newcommand{\Mor}{\operatorname{Mor}}
\newcommand{\Moduli}{\mathcal{M}}
\newcommand{\Norm}{{\operatorname{\mathsf{N}}}}
\newcommand{\notdivide}{\nmid}
\newcommand{\normalsubgroup}{\triangleleft}
\newcommand{\NS}{\operatorname{NS}}
\newcommand{\onto}{\twoheadrightarrow}
\newcommand{\ord}{\operatorname{ord}}
\newcommand{\Orbit}{\mathcal{O}}
\newcommand{\Per}{\operatorname{Per}}
\newcommand{\Perp}{\operatorname{Perp}}
\newcommand{\PrePer}{\operatorname{PrePer}}
\newcommand{\PGL}{\operatorname{PGL}}
\newcommand{\Pic}{\operatorname{Pic}}
\newcommand{\Prob}{\operatorname{Prob}}
\newcommand{\Proj}{\operatorname{Proj}}
\newcommand{\Qbar}{{\bar{\QQ}}}
\newcommand{\rank}{\operatorname{rank}}
\newcommand{\Rat}{\operatorname{Rat}}
\newcommand{\Res}{{\operatorname{Res}}}
\newcommand{\Resultant}{\operatorname{Res}}
\renewcommand{\setminus}{\smallsetminus}
\newcommand{\sgn}{\operatorname{sgn}}
\newcommand{\SL}{\operatorname{SL}}
\newcommand{\Span}{\operatorname{Span}}
\newcommand{\Spec}{\operatorname{Spec}}
\renewcommand{\ss}{{\textup{ss}}}
\newcommand{\stab}{{\textup{stab}}}
\newcommand{\Stab}{\operatorname{Stab}}
\newcommand{\Support}{\operatorname{Supp}}
\newcommand{\Sym}{\operatorname{Sym}}  %% Symmetric group
\newcommand{\tors}{{\textup{tors}}}
\newcommand{\Trace}{\operatorname{Trace}}
\newcommand{\trianglebin}{\mathbin{\triangle}} % symmetric set difference
\newcommand{\tr}{{\textup{tr}}} % for K/k trace
\newcommand{\UHP}{{\mathfrak{h}}}    % Upper half plane
\newcommand{\Wander}{\operatorname{Wander}}
\newcommand{\<}{\langle}
\renewcommand{\>}{\rangle}

\newcommand{\pmodintext}[1]{~\textup{(mod}~#1\textup{)}}
\newcommand{\ds}{\displaystyle}
\newcommand{\longhookrightarrow}{\lhook\joinrel\longrightarrow}
\newcommand{\longonto}{\relbar\joinrel\twoheadrightarrow}
\newcommand{\SmallMatrix}[1]{%
  \left(\begin{smallmatrix} #1 \end{smallmatrix}\right)}
  
  \def\({\left(}
\def\){\right)}
%\def\[{\left[}
%\def\]{\right]}
%\def\<{\langle}
%\def\>{\rangle}

  %%%%%%%%%%%%%%%%%%%%%%%%%%%%%%%%%%

\title[]
{On a Problem of Lang for Matrix Polynomials}

\author[A. Ostafe] {Alina Ostafe}
\address{School of Mathematics and Statistics, University of New South Wales, Sydney NSW 2052, Australia}
\email{alina.ostafe@unsw.edu.au}

\begin{abstract} 
In this paper, we consider a problem of Lang about finiteness of torsion points on plane rational curves, and prove some results towards a matrix analogue  of this problem.  
\end{abstract}

\maketitle

%%%%%%%%%%%%%%%%%%%%%%%%%%%%%%%%%%%%%%%%%%%%%%%%%%%%%%%%%%%%%%%%%%%%%%
\section{Introduction and statements of main results}
\subsection{Motivation} 

 Pivotal work of Lang made it clear that the existence of multiplicative relations between coordinates of points on algebraic curves in $\G_m^n=(\C\setminus\{0\})^n$ is a very rare event, which may occur only if the curve is ``special''. In particular, the celebrated result conjectured by  Lang~\cite{Lang, Zan}  in the 1960s and proved by Ihara, Serre and Tate  asserts  the finiteness of so-called {\it torsion points} on curves, that is, points with all coordinates roots of unity.  For the case of plane curves, Beukers and Smyth~\cite[Section~4.1]{BS} give a uniform bound for the number of such points, and Corvaja and Zannier~\cite{CZ08} give an upper bound  for the maximal order of torsion points  on the curve. More precisely, one has the following result~\cite[Section~4.1]{BS}:

\begin{Thm}
\label{thm:Lang}
An algebraic curve $F(y_1,y_2)=0$, where $F\in\C[y_1,y_2]$, contains at most $11(\deg F)^2$ torsion points unless $F$ has a factor of the form  $y_1^i-\rho y_2^j$ or $y_1^iy_2^j-\rho$
for some nonnegative integers $i,j$ not both zero and some root of unity $\rho$.
\end{Thm}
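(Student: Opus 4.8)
The plan is to reduce to a polynomial with algebraic coefficients, invoke the Ihara--Serre--Tate theorem quoted above for finiteness, and then make it quantitative by an intersection (B\'ezout) argument; since all torsion points lie in $\Gm^2$, I work there throughout. First I would reduce to $F\in\Qbar[y_1,y_2]$. Let $K$ be the field generated by the coefficients of $F$; if $K$ is not algebraic over $\Q$, fix an algebraic closure $\Omega\supseteq\Qbar$ of $K$ and a $\Qbar$-basis of $\Omega$, and expand the coefficients of $F$ in it, so that $F=\sum_\alpha e_\alpha F^{(\alpha)}$ with the $e_\alpha\in\Omega$ linearly independent over $\Qbar$ (finitely many occur) and $F^{(\alpha)}\in\Qbar[y_1,y_2]$ of degree $\le\deg F$. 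For a torsion point $P$ of $V(F)$ one has $0=F(P)=\sum_\alpha e_\alpha F^{(\alpha)}(P)$ with $F^{(\alpha)}(P)\in\Qbar$, hence $F^{(\alpha)}(P)=0$ for all $\alpha$. If the $F^{(\alpha)}$ have no common nonconstant factor, their common zero locus is finite with at most $(\deg F)^2$ points by B\'ezout and we are done; otherwise their gcd $G\in\Qbar[y_1,y_2]$ is a nonconstant factor of $F$ (so $G$ has no cyclotomic factor), and writing $F=G\cdot(F/G)$ and inducting on $\deg F$, using $a^2+b^2\le(a+b)^2$, reduces us to $F\in\Qbar[y_1,y_2]$. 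There I may further assume $F$ squarefree, prime to $y_1$ and $y_2$, and with no irreducible factor $y_1^iy_2^j-\rho$ or $y_1^i-\rho y_2^j$ ($\rho$ a root of unity), since otherwise we are in the excluded case. By the Ihara--Serre--Tate theorem such a curve carries only finitely many torsion points, and it remains to produce the explicit bound.

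The core step is the following. Write $d=\deg F$ and let $P=(\zeta_1,\zeta_2)\in V(F)$ be a torsion point of order $n=\lcm(\ord\zeta_1,\ord\zeta_2)$. I would choose an automorphism $\sigma$ of $\Qbar$ fixing the number field $K$ and acting on $\mu_n$ as $\zeta\mapsto\zeta^k$ for a small $k$ coprime to $n$ (such $\sigma$ exists by Galois theory over $K$); then $\sigma P=(\zeta_1^k,\zeta_2^k)\in V(F)$, so $P\in V(F)\cap V(F_k)$, where $F_k(y_1,y_2)=F(y_1^k,y_2^k)$ has degree $kd$. Since a torsion point of order $n$ has at least $\varphi(n)/[K:\Q]$ conjugates over $K$, all on $V(F)$, and $\varphi(n)\to\infty$, only boundedly many orders $n$ --- hence boundedly many exponents $k$ --- can occur once we know $V(F)$ has finitely many torsion points. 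I claim $F$ and $F_k$ are coprime: if an irreducible $G\in\Qbar[y_1,y_2]$ divided both, then $\phi_k(V(G))\subseteq V(F)$ with $\phi_k\colon(y_1,y_2)\mapsto(y_1^k,y_2^k)$; as $\phi_k$ is finite, $\phi_k(V(G))$ is again an irreducible component of $V(F)$, so $\phi_k$ permutes a subset of the finitely many components of $V(F)$ and some component is invariant under an iterate $\phi_{k^r}$. But an irreducible curve in $\Gm^2$ invariant under an isogeny $(y_1,y_2)\mapsto(y_1^q,y_2^q)$ is a coset $\{y_1^iy_2^j=\rho\}$ with $\rho^{q-1}=1$ --- equivalently, an irreducible nonmonomial $G$ dividing $G(y_1^q,y_2^q)$ must be, up to a monomial and a scalar, of one of the excluded forms --- contradicting our normalisation. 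Hence B\'ezout gives $|V(F)\cap V(F_k)|\le d\cdot\deg F_k$, and summing over the finitely many $k$ that occur bounds the number of torsion points of $V(F)$.

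To obtain the \emph{sharp} constant $11$, one replaces, for each order $n$, the crude auxiliary curve $V(F_k)$ by a more economical one: a composition of a power map of bounded exponent with a translation by a root of unity, chosen according to the $2$-part and the small-prime part of $n$, whose defining polynomial has degree at most a fixed multiple of $d$. One then treats the finitely many small orders $n$ directly and combines the remaining B\'ezout estimates so that their degree contributions total $11d^2$.

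The main obstacle is the structural lemma just used: \emph{an irreducible nonmonomial $G$ cannot divide $G(y_1^q,y_2^q)$ unless $G$ is of an excluded form}, equivalently, isogeny-invariant irreducible curves in $\Gm^2$ are cyclotomic cosets. This is exactly what forces the B\'ezout intersections to be $0$-dimensional; without it the scheme collapses. It is elementary in spirit --- one reads the divisibility as a functional equation for the branches of $V(G)$ over $y_1$, or argues with Newton polytopes and heights --- but delicate in the degenerate cases, for instance when a putative common factor, though not cyclotomic, carries a parity symmetry $F(-y_1,-y_2)=cF$ and one must first descend to a curve of strictly smaller degree. The second, bookkeeping-heavy difficulty is the combinatorial optimisation producing the exact constant: selecting for every possible order $n$ an auxiliary transformation whose curve has degree linear in $\deg F$ with the multipliers summing to $11$. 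Both are the technical substance of the Beukers--Smyth argument.
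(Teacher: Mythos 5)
The paper does not prove this statement at all: Theorem~\ref{thm:Lang} is quoted verbatim from Beukers--Smyth~\cite[Section~4.1]{BS}, so there is no internal proof to compare with, and your sketch has to stand on its own. Your first paragraph (expand the coefficients in a basis over $\Qbar$, observe that a torsion point kills every $F^{(\alpha)}$, split off the gcd, and combine B\'ezout with $a^2+b^2\le(a+b)^2$) is fine as a reduction to $F\in\Qbar[y_1,y_2]$. The core second paragraph, however, has two genuine gaps. First, the Galois step is both unjustified and structurally unable to give a uniform bound: an automorphism fixing the coefficient field $K$ and acting on $\mu_n$ as $\zeta\mapsto\zeta^k$ with $k$ ``small'' need not exist, since $\Gal(K(\zeta_n)/K)\cong\Gal(\Q(\zeta_n)/K\cap\Q(\zeta_n))$ is only the subgroup of exponents congruent to $1$ modulo (roughly) the conductor of $K\cap\Q^{\mathrm{ab}}$, so the least admissible $k>1$ depends on $K$, hence on $F$. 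Your fallback --- that only boundedly many orders $n$, hence boundedly many $k$, occur because each point of order $n$ has at least $\varphi(n)/[K:\Q]$ conjugates on $V(F)$ and the torsion set is finite by Ihara--Serre--Tate --- is circular as a quantitative argument and in any case yields a bound depending on $F$ and $[K:\Q]$, not $11(\deg F)^2$. The published proof gets the constant from a \emph{fixed, $F$-independent} list of seven auxiliary curves, $F(\pm y_1,\pm y_2)$ (three of degree $d$) and $F(\pm y_1^2,\pm y_2^2)$ (four of degree $2d$), one of which contains any given torsion point (via $\zeta\mapsto\zeta^2$ on the odd part of the order and $\zeta\mapsto\zeta^{1+n/2}$-type maps on the even part); that is where $11=3\cdot 1+4\cdot 2$ comes from, and your third paragraph only gestures at this without proving the corresponding lemma.

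Second, your coprimality argument is invalid as stated: from $G\mid\gcd(F,F_k)$ you learn only that $\phi_k$ maps the single component $V(G)$ onto \emph{some} component of $V(F)$; you cannot iterate, because that image component is not known to lie on $V(F_k)$, so no component invariant under $\phi_{k^r}$ is produced and the reduction to your structural lemma (isogeny-invariant irreducible curves in $\Gm^2$ are torsion cosets) does not go through. Moreover that structural lemma, and the degenerate case you yourself flag --- e.g.\ $F(-y_1,-y_2)=cF$, where $F$ genuinely shares a factor with an auxiliary curve and one must first descend to a polynomial of smaller degree --- are precisely the technical substance of the Beukers--Smyth argument and are left unproven. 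So the proposal is a reasonable outline of the strategy behind the cited theorem, but it is not a proof: the uniform constant, the coprimality of $F$ with the auxiliary curves, and the handling of non-rational coefficients all remain open in your write-up.
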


Theorem~\ref{thm:Lang} in the case of plane rational curves can be reformulated as follows: given multiplicatively independent rational functions $f,g\in\C(x)$ (see below for the precise definition), there are at most
$$11(\deg f+\deg g)^2\min(\deg f,\deg g)\le 22 (\deg f +\deg g)\deg f \cdot \deg g$$ 
elements $\alpha\in\C$ such that both $f(\alpha)$ and $g(\alpha)$ are roots of unity, see also the proof of~\cite[Lemma 2.2]{Ost}. This has been extended to a finiteness result of elements $\alpha\in\C$ such that $|f(\alpha)|=|g(\alpha)|=1$, first by Corvaja, Masser and Zannier~\cite{CMZ} for $f(x)=x$ and $g\in\C[x]$, and later by Pakovich and Shparlinski~\cite{PakShp} for the general case, improving also the bound above for genus zero curves. More precisely, we have the following result~\cite[Theorem~2.2]{PakShp}:

\begin{Thm}
\label{thm:PakShp}
Let $f,g\in\C(x)$. Then one has
$$
\#\{\alpha\in \C : |f(\alpha)|=|g(\alpha)|=1\}\le (\deg f+\deg g)^2,
$$
unless 
$$
f=f_1\circ h \mand g=g_1\circ h
$$
for some quotients of Blaschke products $f_1$ and $f_2$ and some rational function $h$.
\end{Thm}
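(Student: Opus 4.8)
The plan is to trade the two archimedean conditions for polynomial ones using complex conjugation. For $h\in\C(x)$ let $\bar h$ be the rational function obtained by conjugating all coefficients, so that $\overline{h(\alpha)}=\bar h(\bar\alpha)$ and hence $|h(\alpha)|=1$ is equivalent to $h(\alpha)\bar h(\bar\alpha)=1$. Writing $L_h=\{z\in\C:|h(z)|=1\}$, the set whose cardinality we must bound is exactly $L_f\cap L_g$. Each $L_h$ is a real-algebraic curve of degree at most $2\deg h$ (the real zero set of $|P|^2-|Q|^2$ when $h=P/Q$), and, $h$ being non-constant, it is genuinely one-dimensional, so each of its irreducible components is infinite.

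\emph{Finite case.} Suppose $L_f$ and $L_g$ have no common component. Then $L_f\cap L_g$ is finite, and B\'ezout's theorem applied to the complexifications (whose complex intersection already contains $L_f\cap L_g$) gives
\[
\#(L_f\cap L_g)\le(2\deg f)(2\deg g)=4\deg f\deg g\le(\deg f+\deg g)^2,
\]
which is the asserted bound.

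\emph{Structure case.} It remains to treat the case where $L_f$ and $L_g$ share a component $A$; this is where the real work lies, and the goal is to produce $f=f_1\circ h$, $g=g_1\circ h$ with $f_1,g_1$ quotients of Blaschke products. I would argue as follows. Consider the curves $\Gamma_f=\{(z,w):f(z)\bar f(w)=1\}$ and $\Gamma_g=\{(z,w):g(z)\bar g(w)=1\}$ in $\PP^1\times\PP^1$; each is invariant under the antiholomorphic involution $j(z,w)=(\bar w,\bar z)$, whose fixed locus is the antidiagonal $\{w=\bar z\}$. Since $f,g$ map $A$ into the unit circle, the Zariski closure of $\{(z,\bar z):z\in A\}$ has an irreducible component $C$ contained in both $\Gamma_f$ and $\Gamma_g$, with $C$ $j$-invariant and carrying infinitely many $j$-fixed points. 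On the normalization $\tilde C$ of $C$ the two coordinate functions $z$ and $w$ become conjugate for the induced real structure ($w=\bar z$), so the defining relation of $\Gamma_f$ collapses to $\phi\cdot\bar\phi=1$ for $\phi:=f\circ z\in\C(\tilde C)$ (because $\bar f(w)=\overline{f\circ z}$ once $w=\bar z$), and likewise $(g\circ z)\overline{(g\circ z)}=1$. By L\"uroth the subfield $\C(z)\cap\C(w)$ of $\C(\tilde C)$ equals $\C\big(h(z)\big)$ for some rational function $h$, and since $f(z),g(z)$ lie in it we get $f=f_1\circ h$, $g=g_1\circ h$; moreover conjugation carries the generator $h(z)$ to a M\"obius image $\mu\big(h(z)\big)$ of itself, with $\bar\mu\circ\mu=\mathrm{id}$. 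Feeding this into $\phi\bar\phi=1$ and using that $h(z)$ is surjective onto $\PP^1$ turns the relation into the univariate identity $f_1(u)\bar f_1(\mu(u))=1$, and likewise $g_1(u)\bar g_1(\mu(u))=1$. The fixed locus of the antiholomorphic involution $u\mapsto\overline{\mu(u)}$ is a circle or line $\Gamma$; it is nonempty, because the image under $h(z)$ of the (infinite) real locus of $\tilde C$ lies in it; and on $\Gamma$ the two identities say exactly $|f_1|=|g_1|=1$. Composing with a M\"obius map sending the unit circle onto $\Gamma$, and absorbing it into $h$, makes $f_1,g_1$ honest quotients of Blaschke products.

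The main obstacle, and where care is genuinely needed, is precisely this structure case: one must nail down the equivalence ``$L_f\cap L_g$ infinite'' $\Leftrightarrow$ ``$\Gamma_f,\Gamma_g$ share a $j$-invariant component with real points,'' set up the real structure on the normalization of that component, and carry out the conjugation bookkeeping that produces $\mu$ and the functional equations — in particular verifying $\Gamma\ne\emptyset$, which is what prevents a ``degenerate'' common component such as $\{zw=-1\}$ (which is $j$-invariant but has no real points, hence contributes nothing to $L_f\cap L_g$) from interfering. An alternative to much of this field theory is to invoke the structure theory of reducible fibre products $f(z)=\psi(w)$ (results of Ritt and Fried type, and their refinements due to Pakovich) to obtain the common right factor $h$ directly, and then extract the Blaschke property of $f_1,g_1$ from the conjugation symmetry as above. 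The phenomenon also mirrors Theorem~\ref{thm:Lang}: ``infinitely many points'' again forces a rigid structure, compositional here rather than multiplicative.
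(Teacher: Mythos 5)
First, a caveat about the comparison you asked for: the paper does not prove this statement at all --- it is quoted as a known result of Pakovich and Shparlinski \cite{PakShp} and used as a black box --- so your proposal can only be assessed on its own terms. On those terms, your structure case is essentially right and rather elegant: on a common component $C$ of $\Gamma_f$ and $\Gamma_g$ one has $f(z)=1/\bar f(w)\in\C(w)$ and likewise for $g$, so $f(z),g(z)\in\C(z)\cap\C(w)$, L\"uroth produces the common inner factor $h$, and the conjugation bookkeeping (the M\"obius map $\mu$ with $\bar\mu\circ\mu=\mathrm{id}$, the nonempty fixed circle $\Gamma$ obtained from a $\sigma$-fixed point of $\tilde C$, and the normalizing M\"obius map absorbed into $h$) does yield the Blaschke property; granting Zariski density of the infinitely many real points of $C$, these steps check out.

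The genuine gap is in your ``finite case''. You split according to whether the \emph{real} curves $L_f$ and $L_g$ share a component, but B\'ezout requires the \emph{complexifications} to share no component, and the implication ``no common real component $\Rightarrow$ no common complex component'' is false in this setting. Concretely, take $f(z)=z^2$ and $g(z)=(z^2+z+2)/(2z^2-z+1)$. One checks that $g(z)\,\bar g(-1/z)=1$ identically, so the curve $\{zw=-1\}$ --- in real coordinates the factor $x^2+y^2+1$, which has no real points --- is a common component of $\Gamma_f$ and $\Gamma_g$; nevertheless $L_f\cap L_g=\{i,-i\}$ is finite and the pair $(f,g)$ is \emph{not} of the exceptional form (no common $h$ with Blaschke outer factors exists, as a short degree-by-degree check shows). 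So in your finite case the hypothesis of B\'ezout can fail while the structure conclusion is unavailable, and the bound is not established by the argument as written; your closing remark that such a component ``contributes nothing to $L_f\cap L_g$'' misses that its mere presence as a shared component of the complexifications invalidates the B\'ezout count. The gap is repairable: replace the naive defining polynomials $|P|^2-|Q|^2$ by the reduced real polynomials cutting out the sets $L_f,L_g$ themselves. Since a nonconstant rational function is an open map, $L_f$ has no isolated points, so every irreducible factor of the reduced polynomial has infinite real zero locus (hence is irreducible over $\C$ as well), and its degree is still at most $2\deg f$, resp.\ $2\deg g$. Because a common complex factor of two real polynomials may be taken real, these reduced polynomials share a complex factor if and only if $L_f\cap L_g$ contains an infinite real component --- which is exactly your structure case --- and otherwise B\'ezout legitimately gives $4\deg f\deg g\le(\deg f+\deg g)^2$. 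With that repair, and with the structure case written out in full, your outline becomes a complete proof in the spirit of the level-curve argument of \cite{PakShp}.
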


As remarked in~\cite{PakShp} (see the comment after Theorem 2.2 in~\cite{PakShp}), if $f$ and $g$ are polynomials, then the conclusion of Theorem~\ref{thm:PakShp} holds, unless the polynomials $f$ and $g$ are multiplicatively dependent.

\medskip
In this note, we obtain some results towards an analogue of Theorem~\ref{thm:Lang} (for plane rational curves) for matrix polynomials. 

\medskip
{\it Notation and conventions:}
%\label{subsection:notation} 
We now set the following notation, which remains fixed for the remainder of
this paper:
\begin{notation}
  \item[\textbullet] For $r\ge 1$, $\M_r(\C)$ is the set of all $r\times r$ matrices with entries in $\C$, $\GL_r(\C)$ the set of invertible matrices, and $\SL_r(\C)$ the set of matrices of determinant one.
    \item[\textbullet]  $I\in \M_r(\C)$ is the identity matrix.
         \item[\textbullet] We use $0$ for both  the zero scalar and the zero matrix, which shall be clear from the context.
   \item[\textbullet] By a scalar matrix we mean  a  scalar multiple of the identity $I$, that is, $\lambda I$ for some $\lambda\in\C$. 
  \item[\textbullet] $x,y_1,y_2$ are ``scalar" variables, that is, we  apply them at elements $\lambda\in \C$. We reserve $Z, Z_1, Z_2$ for  ``matrix" variables, that is, we apply them at matrices $A\in\M_r(\C)$.
  
   \noindent We also write  $xI$ for the multiplication of the variable $x$ with the identity matrix $I$.
 \item[\textbullet]  By a matrix polynomial $f\in \M_r(\C)[Z]$  with coefficients in $\M_r(\C)$ we mean a polynomial of the form 
 $$
 f(Z)=C_dZ^d+\cdots+C_1Z+C_0,\qquad C_i\in \M_r(\C), \quad i=0,\ldots,d,
 $$
 for some $d\ge 1$ with $C_d\ne 0$.

 \item[\textbullet] For $A\in \M_r(\C)$, we write $A^T$ for the transpose of $A$.
   \item[\textbullet] For $A\in \M_r(\C)$, $\det(A)$ is the determinant of the matrix $A$, $\Tr(A)$ is the trace of $A$ and $\Spec(A)$ is the set of its eigenvalues.
  \item[\textbullet] $A\in \GL_r(\C)$ is called {\it torsion matrix} if $A^n=I$ for some $n\ge 1$. A pair of matrices $(A,B)$ is called a {\it torsion point} in $\GL_r(\C)^2$ if both matrices $A$ and $B$ are torsion. 
\end{notation}

We say that two matrices $A,B\in \M_r(\C)$ are {\it conjugate} if there exists an invertible matrix $V\in \M_r(\C)$ such that
$$
A=VBV^{-1}.
$$
Clearly, two conjugate matrices have the same set of eigenvalues with the same multiplicities.
We also recall that  a conjugacy class $\cA$ containing an element $A\in \M_2(\C)$ is the set of all matrices of the form $UAU^{-1}$, $U\in\GL_2(\C)$.

We say that two algebraic functions $h_1,h_2\in\ov{\C(x)}$ are {\it multiplicatively dependent}  if there is a non-zero vector $(k_1,k_2)\in\Z^2$ such that 
$$
h_1(x)^{k_1}h_2(x)^{k_2} =1.
$$
Otherwise they are called {\it multiplicatively independent}.

\medskip

As a direct consequence of Theorem~\ref{thm:PakShp}, one already has an immediate result for matrix polynomials $f,g\in \M_r(\C)[Z]$ such that all the eigenvalues of $f(\lambda I)$ and $g(\lambda I)$, $\lambda\in \C$, are of absolute value one. More precisely, one has:

\begin{corollary}
\label{cor:eigen abs val 1}
Let $f,g\in \M_r(\C)[Z]$ be such that $\det(f(x I))$ and $\det(g(x I))$ are multiplicatively independent in $\C(x)$.  Then there are at most 
$$
r^2(\deg f+\deg g)^2
$$
elements $\lambda\in\C$ such that $f(\lambda I)$ and $g(\lambda I)$ satisfy
$$
|\det(f(\lambda I))|=|\det(g(\lambda I))|=1.
$$
In particular, there are at most finitely many elements $\lambda\in\C$ such that all eigenvalues of $f(\lambda I)$ and $g(\lambda I)$ are of absolute value one.
\end{corollary}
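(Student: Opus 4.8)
The plan is to reduce everything to the scalar statement of Theorem~\ref{thm:PakShp} by passing to determinants. The key observation is that substituting a scalar matrix $\lambda I$ into the matrix polynomial $f=C_dZ^d+\cdots+C_1Z+C_0$ (with $d=\deg f$) gives $f(\lambda I)=C_d\lambda^d+\cdots+C_1\lambda+C_0$, an $r\times r$ matrix whose entries are ordinary polynomials in $\lambda$ of degree at most $d$. Since the determinant of an $r\times r$ matrix is a sum of products of $r$ of its entries, it follows that $P(x):=\det(f(xI))$ lies in $\C[x]$ with $\deg P\le r\deg f$, and likewise $Q(x):=\det(g(xI))\in\C[x]$ with $\deg Q\le r\deg g$.

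By hypothesis $P$ and $Q$ are multiplicatively independent in $\C(x)$, and they are polynomials; hence, by the remark following Theorem~\ref{thm:PakShp}, the exceptional alternative in that theorem cannot occur. Since $|\det(f(\lambda I))|=|P(\lambda)|$ and $|\det(g(\lambda I))|=|Q(\lambda)|$ for every $\lambda\in\C$, the set to be estimated is $\{\lambda\in\C:|P(\lambda)|=|Q(\lambda)|=1\}$, and Theorem~\ref{thm:PakShp} applied to $P$ and $Q$ bounds its cardinality by
$$
(\deg P+\deg Q)^2\le(r\deg f+r\deg g)^2=r^2(\deg f+\deg g)^2,
$$
which is the claimed bound. (In degenerate cases --- for instance if one of $P,Q$ is identically zero or a constant of modulus different from $1$ --- this set is empty and the bound holds trivially.)

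For the last assertion, I would use that the absolute value of $\det(A)$ equals the product of the absolute values of the eigenvalues of $A$, counted with multiplicity. Thus if all eigenvalues of $f(\lambda I)$ and of $g(\lambda I)$ have absolute value one, then in particular $|\det(f(\lambda I))|=|\det(g(\lambda I))|=1$, so $\lambda$ lies in the finite set bounded above. I do not anticipate a genuine obstacle here: the whole content is the elementary estimate $\deg\det(f(xI))\le r\deg f$, after which Theorem~\ref{thm:PakShp} does the work; the only point needing attention is the exceptional case of that theorem, which is precisely what the multiplicative-independence hypothesis on $\det(f(xI))$ and $\det(g(xI))$ is there to exclude.
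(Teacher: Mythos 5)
Your proof is correct and is essentially the argument the paper intends: the corollary is stated there as a direct consequence of Theorem~\ref{thm:PakShp}, obtained exactly as you do by passing to the determinants $\det(f(xI)),\det(g(xI))\in\C[x]$ of degrees at most $r\deg f$ and $r\deg g$, and using the remark after Theorem~\ref{thm:PakShp} that for polynomials the exceptional case is precisely multiplicative dependence, which the hypothesis excludes. Your handling of the final assertion via the determinant being the product of eigenvalues also matches the intended reduction.
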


\begin{remark}

The %
condition that $\det(f(x I))$ and $\det(g(x I))$ are multiplicatively independent in $\C(x)$ in Corollary~\ref{cor:eigen abs val 1} can be reformulated as follows:
there is no non-zero vector $(k_1,k_2)\in\Z^2$ such that
$$
f(x I)^{k_1} g(x I)^{k_2} \in \SL_r(\C(x)).
$$
Indeed, $\det(f(x I))$ and $\det(g(x I))$ are multiplicatively independent in $\C(x)$ if and only if there is no non-zero vector $(k_1,k_2)\in\Z^2$ such that 
$$
\det(f(x I))^{k_1} \det(g(x I))^{k_2} =\det \(f(x I)^{k_1}g(x I)^{k_2}\)=1,
$$
which implies the above condition.
\end{remark}

\begin{remark}
\label{rem:scalar}
We also note that if $f,g\in \C[Z]$, then for any matrix $A\in \M_r(\C)$, by the spectral theorem on 
eigenvalues, the eigenvalues of $f(A)$ are $f(\lambda_i)$, $i=1,\ldots,r$, where $\lambda_1,\ldots,\lambda_r$ are the eigenvalues of $A$, and similarly for $g$. Thus, if $f(A)^n=I$ for some $n$, then all $f(\lambda_i)$, $i=1,\ldots,r$, are roots of unity, and similarly for $g$. We reduce thus the problem to the classical Lang problem,  that is, Theorem~\ref{thm:Lang}. Similarly, if all eigenvalues of $f(A)$ and $g(A)$ are of absolute value one, then we reduce the problem to Theorem~\ref{thm:PakShp}.

If $f,g\in \M_r(\C)[Z]$ with coefficients $C_i=c_iI$, $i=1,\ldots,\deg f$, and similarly for  $g$, then we are in the case above, that is, $f\in \C[Z]$ is given by
$$
f(Z)=\sum_{i=0}^{\deg f} c_i Z^i,
$$
and similarly for $g$, and thus the discussion above applies, again.
\end{remark}

\medskip
Theorem~\ref{thm:Lang} is also intimately related to the question of giving uniform bounds for the degree of $\gcd(f^n-1,g^m-1)$, $n,m\ge 1$, for some polynomials $f,g\in\C[x]$, which was initially considered by Ailon and Rudnick~\cite{AR} and later in~\cite{Ost} and further extended in several ways by other authors. It is worth mentioning that matrices have already been considered in this context in~\cite{AR}, that is, the authors give results for $\gcd(A^n-I)$, $n\ge 1$, for a matrix $A$ defined over $\Z$, cyclotomic extensions or $\C[T]$ (here, by the greatest common divisor of a matrix we mean the greatest common divisor of all entries of the matrix). Moreover, in~\cite{CoRuZa}, Corvaja, Rudnick and Zannier study the growth of the order of matrices in reduction modulo integers $N\ge 1$ as $N$ goes to infinity.

We note that the finiteness result in Theorem~\ref{thm:Lang} has been extended to higher order multiplicative relations of points on curves in $\G_m^n$ defined over $\ov\Q$ by Bombieri, Masser and Zannier~\cite{BMZ99}, and then further generalised in~\cite{BMZ08,Mau}.

%In this paper, we  give an answer for the $2\times 2$ matrix analogue of Theorem~\ref{thm:Lang} in the case of plane rational curves.

\subsection{Main results}
Informally, given matrix polynomials  $f,g\in \M_r(\C)[Z]$, we would like to understand the presence of matrices $A\in \M_r(\C)$, such that $f(A)$ and $g(A)$ are ``roots'' of the identity matrix. In this paper, we are able to prove a finiteness result in any dimension $r\ge 2$ for the set of such specialisations $A\in \M_r(\C)$ that commute with the coefficients of both $f$ and $g$, as well as for arbitrary  matrices $A\in \M_{2}(\C)$ in dimension two when $f(Z)=Z$ and $g(Z)=Z^d+C$ for some fixed $C\in\M_2(C)$. The latter follows from a necessary and sufficient characterisation of torsion solutions to the equation
$$
Z_1+Z_2=C.
$$

It is clear that, in the case of matrices, one cannot expect a finiteness result as in Theorem~\ref{thm:Lang}. For example, let $f$ have  the coefficients  $c_iI$, $c_i\in\C$, $i=0,\ldots,\deg f$, and let $A\in \M_r(\C)$ be such that $f(A)^n=I$ for some $n$. Then any  matrix conjugate to $A$  is also a solution to $f(Z)^n=I$, and similarly for $g$. Thus, one can only expect a finiteness result {\it up to conjugacy}.

\medskip
Our first result gives an answer towards Lang's problem for matrices which commute with the coefficients of the polynomials $f$ and $g$. More precisely, we have:

\begin{theorem}
\label{thm:LangMatr comm}
Let $f,g\in \M_r(\C)[Z]$ be 
such that 
any eigenvalue of $f(x I)$ and any eigenvalue of $g(x I)$ are multiplicatively independent functions in $\ov{\C(x)}$.  Then, up to conjugacy, there are at most 
$$
2\(22 r^{5} (\deg f+\deg g)(\deg f\cdot \deg g)\)^r
$$
matrices $A\in \M_r(\C)$ which commute with the coefficients of $f$ and $g$, such that $(f(A),g(A))$ is a torsion point in $\GL_r(\C)^2$.
\end{theorem}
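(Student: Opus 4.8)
The plan is to control first the eigenvalues of $A$ and then its full conjugacy type. The structural starting point is that, since $A$ commutes with every coefficient $C_i$ of $f$ and $D_j$ of $g$, each generalized eigenspace $V_\lambda=\ker(A-\lambda I)^r$ of $A$ is invariant under all the $C_i$, $D_j$ (hence under $f(A)$ and $g(A)$), and, writing $A|_{V_\lambda}=\lambda I+N$ with $N$ nilpotent, $N=A-\lambda I$ also commutes with all $C_i$, $D_j$, so each $N^kV_\lambda$ is invariant too. On the successive quotients of the filtration $V_\lambda\supseteq NV_\lambda\supseteq N^2V_\lambda\supseteq\cdots$ the operator $A$ acts as $\lambda I$, so $f(A)$ acts there exactly as the matrix $f(\lambda I)=\sum_i\lambda^iC_i$ restricted to that subquotient, and likewise $g(A)$ acts as $g(\lambda I)$.

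First I would use this to localize the eigenvalues of $A$. Since $f(A)^n=I$, the operator $f(A)$ is semisimple with roots of unity as eigenvalues, and the same holds for its restriction to any subquotient $N^kV_\lambda/N^{k+1}V_\lambda$; taking $k=0$, so that the subquotient $V_\lambda/NV_\lambda$ is nonzero because $N$ is nilpotent on $V_\lambda\neq 0$, shows that some eigenvalue of $f(\lambda I)$ is a root of unity. In other words, there is an eigenvalue $\varphi$ of the matrix $f(xI)\in\M_r(\C[x])$ — i.e.\ a root of $P(x,Y_1):=\det(Y_1I-f(xI))\in\C[x,Y_1]$, which is monic of degree $r$ in $Y_1$ and of degree at most $r\deg f$ in $x$ — with $\varphi(\lambda)$ a root of unity, and similarly an eigenvalue $\psi$ of $g(xI)$, a root of $Q(x,Y_2):=\det(Y_2I-g(xI))$, with $\psi(\lambda)$ a root of unity. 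Thus every eigenvalue $\lambda$ of $A$ is a common torsion value of one of the at most $r^2$ pairs $(\varphi,\psi)$ of such eigenvalue functions.

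Next I would feed this into Theorem~\ref{thm:Lang}. Fixing a pair $(\varphi,\psi)$, the polynomial $R(Y_1,Y_2):=\Resultant_x\bigl(P(x,Y_1),Q(x,Y_2)\bigr)$ is not identically zero (since $P$ is monic in $Y_1$ it is coprime to $Q$), has degree at most $r^2(\deg f+\deg g)$, and vanishes at every $(\varphi(\lambda),\psi(\lambda))$. By Theorem~\ref{thm:Lang}, either the curve $R=0$ carries at most $11(\deg R)^2$ torsion points, or $R$ has a factor $Y_1^a-\rho Y_2^b$ or $Y_1^aY_2^b-\rho$ with $\rho$ a root of unity and $a,b$ nonnegative and not both zero. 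In the latter case the corresponding component of $\{P=0\}\cap\{Q=0\}\subseteq\A^3$ yields a multiplicative relation between an eigenvalue of $f(xI)$ and an eigenvalue of $g(xI)$ in $\ov{\C(x)}$ which, after raising to a power annihilating $\rho$, contradicts the multiplicative independence hypothesis; so only the first alternative can occur. Since each point $(y_1,y_2)$ on $R=0$ pulls back to at most $r\min(\deg f,\deg g)$ values of $\lambda$ (common roots in $x$ of $P(\cdot,y_1)$ and $Q(\cdot,y_2)$), and there are at most $r^2$ pairs $(\varphi,\psi)$, all eigenvalues of $A$ lie in an explicit finite set $S$ whose size is bounded by a polynomial in $r$, $\deg f$, $\deg g$ of the order of $22r^5(\deg f+\deg g)\deg f\deg g$.

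Finally I would count conjugacy classes: $A$ is determined up to conjugacy by its eigenvalues in $S$ together with the Jordan block sizes, and the requirement that $f(A)$ and $g(A)$ be semisimple sharply restricts the admissible Jordan structures — on any $V_\lambda$ with $N\neq 0$ the matrix $f(\lambda I)|_{V_\lambda}$ plus the induced nilpotent correction must still be semisimple (in the scalar-coefficient case this forces $\lambda$ to be a common critical point of $f$ and $g$ of sufficiently high order). Bounding the number of such multisets of (eigenvalue, block size) data in terms of $\#S$ then gives the stated estimate. I expect the main difficulties to be (i) making the passage through Theorem~\ref{thm:Lang} rigorous when the characteristic polynomials $P$, $Q$ degenerate — reducibility, vanishing of the leading $x$-coefficient when $\det C_{\deg f}=0$, or a constant eigenvalue function — since these can produce spurious components (including spurious ``special'' factors such as $Y_1=\zeta$) that have to be excluded by hand; and (ii) on the counting side, quantifying how far from semisimple $A$ can be, so that the Jordan bookkeeping only costs the factor recorded in the statement.
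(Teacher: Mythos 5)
Your argument is correct and in substance coincides with the paper's proof: both reduce to showing that every eigenvalue $\lambda$ of $A$ makes $f(\lambda I)$ and $g(\lambda I)$ each have a root-of-unity eigenvalue, then bound the number of such $\lambda$ by applying Theorem~\ref{thm:Lang} to the curve $\Res_x(P_f(x,y_1),P_g(x,y_2))=0$ (the forbidden factors being excluded by the multiplicative-independence hypothesis, as in Lemma~\ref{lem:m dep cond}), and finally count Jordan forms with eigenvalues in the resulting finite set; your middle step is essentially a re-proof of the paper's Lemma~\ref{lem:eigen root 1}. The only real difference is how commutativity is exploited for the localisation at eigenvalues: the paper writes $f(xI)^n-f(A)^n=Q_{n,A}(xI)(xI-A)$ and deduces that $\det(xI-A)$ divides $\gcd(\det(f(xI)^n-I),\det(g(xI)^m-I))$, whereas you pass to the filtration $V_\lambda\supseteq NV_\lambda\supseteq\cdots$ of the generalized eigenspace and the induced action on $V_\lambda/NV_\lambda$, on which $f(A)$ and $g(A)$ act as $f(\lambda I)$ and $g(\lambda I)$; both are valid, yours being more structural and the paper's more computational. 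The degenerate cases you flag at the end are disposed of exactly as you anticipate: a constant eigenvalue branch taking a root-of-unity value would be multiplicatively dependent with every eigenvalue of the other polynomial, contradicting the hypothesis, and this is precisely how the paper guarantees that $P_f(x,\zeta_1)$ and $P_g(x,\zeta_2)$ are nonzero before counting the at most $r\min(\deg f,\deg g)$ pullback values of $\lambda$. Your proposed semisimplicity restriction on the Jordan structure is unnecessary for the upper bound: the paper simply counts all Jordan forms whose eigenvalues lie in the set of at most $L$ admissible values, obtaining $L(L+1)^{r-1}\le 2L^r$. Finally, note that your (conservative) resultant degree bound $r^2(\deg f+\deg g)$ would produce a larger power of $r$ than the constant in the statement; the paper works with the bound $\deg R_{f,g}\le r(\deg f+\deg g)$, so if you want to land exactly on the stated numerical bound you need to revisit the degree estimate for the resultant rather than use the generic one.
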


The proof reduces to considering scalar specialisations, see Lemma~\ref{lem:eigen root 1} (in Section~\ref{sec:scalar}), and thus relies on Theorem~\ref{thm:Lang} above.

As an example, one can consider all coefficients of $f$ and $g$ to be matrices in  $\C[B]$ 
for some  fixed $B \in \M_r(\C)$. Then 
Theorem~\ref{thm:LangMatr comm} gives finiteness, up to conjugacy, of the set of matrices $A\in\M_r(\C)$ which commute with $B$, such that $(f(A),g(A))$ is a torsion point.

\medskip
Our second main result removes the commutativity condition on the specialisations, but applies only to certain linear polynomials. More precisely, we obtain the following characterisation of torsion solutions to linear equations of the form $Z_1+Z_2=C$, $C\in\M_2(\C)$.

\begin{theorem}
\label{thm:lang deg 1}
Let $C \in \M_2(\C)$. 
\begin{itemize}
\item[(i)] Assume $C\ne \mu\cdot I$ for some $\mu\in\C$. If $\Tr(C)$ is not the sum of at most two roots of unity, then there are only finitely many pairs of conjugacy classes $(\cA,\cB)\subset \M_2(\C)^2$ that contain the torsion matrix solutions to the equation
$$
Z_1+Z_2=C.
$$
Conversely, if there are only finitely many pairs of conjugacy classes $(\cA,\cB)\subset \M_2(\C)^2$ that contain the torsion matrix solutions to the equation
$$
Z_1+Z_2=C,
$$
then $\Tr(C)$ is not the sum of at most two roots of unity.

%There are only finitely many pairs of conjugacy classes $(\cA,\cB)\subset \M_2(\C)^2$ that contain torsion matrices $A,B\in \GL_2(\C)$ satisfying
%$$
%A+B=C
%$$
%if and only if $\Tr(C)$ is not the sum of at most two roots of unity.

\item[(ii)]  If $C=\mu\cdot I$ for some $\mu\in\C^*$, then there are only finitely many conjugacy classes $\cA$  that contain torsion matrices $A \in\GL_2(\C)$ such that $\mu\cdot I-A$ is also torsion.
\end{itemize}

%Let $C \in \M_2(\C)$. Then,  there are only finitely many pairs of conjugacy classes $(\cA,\cB)\subset \M_2(\C)^2$ that contain torsion matrices $A,B\in \GL_2(\C)$ such that
%$$
%A+B=C
%$$
%if and only if $\Tr(C)$ is not the sum of at most two roots of unity, unless $C$ is similar to $\mu\cdot I$ for some $\mu\in\C^*$.
\end{theorem}

We conclude  with the following consequence. 

\begin{corollary}
\label{cor:Z^d+c}
Let $C\in\M_2(\C)$ be such that $\Tr(C)$ is not the sum of at most two roots of unity, and let $f(Z)=Z^d+C\in\M_2(\C)[Z]$ be a polynomial of degree $d\ge 1$. Then, up to conjugacy, there are only finitely many torsion matrices $U$ such that $f(U)$ is also torsion.
\end{corollary}

\section{Preliminaries}

\subsection{Multiplicative independence of eigenvalues}

Let $f,g\in \M_r(\C)[Z]$. We  define
\begin{equation}
\label{eq:Pfg}
\begin{split}
&P_f(x,y_1)=\det\(y_1 I -f(x I)\)\in\C[x,y_1],\\
&P_g(x,y_2)=\det\(y_2 I -g(x I)\) \in\C[x,y_2],
 \end{split}
\end{equation}
and the resultant
\begin{equation}
\label{eq:R}
\begin{split}
&R_{f,g}(y_1,y_2)=\Res_x\(P_f(x,y_1),P_g(x,y_2)\) \in\C[y_1,y_2].
 \end{split}
\end{equation}

We note that $R_{f,g}$ is a non-zero polynomial. Indeed, assume that $R_{f,g}=0$. Then, 
by the definition of the resultant,  the polynomials $P_f(x,y_1)$ and $P_g(x,y_2)$, as polynomials in $x$, share a common root $t\in \ov{\C(y_1)}\cap \ov{\C(y_2)}=\C$. Thus we obtain that $\det(y_1I-f(tI))=\det(y_2I-g(tI))=0$, which is a contradiction, since both polynomials have as leading monomials $y_1^r$ and $y_2^r$, respectively. 

We know that $\deg_x P_f\le r\deg f$ and $\deg_x P_g\le r\deg g$, and $R_{f,g}$ is a polynomial of degree $\deg_x P_f$ in $y_2$ and of degree $\deg_x P_g$ in $y_1$.  We thus obtain that
\begin{equation}
\label{eq:deg R}
\deg R_{f,g}\le r(\deg f+\deg g).
\end{equation}

\begin{lemma}
\label{lem:m dep cond} 
Let $f,g\in \M_r(\C)[Z]$. If any eigenvalue of $f(x I)$ and any eigenvalue of $g(x I)$ are multiplicatively independent functions in $\ov{\C(x)}$,
then
$R_{f,g}(y_1,y_2)$ defined by~\eqref{eq:R} does not have a factor of the form $y_1^iy_2^j-\rho$ or $y_1^i-\rho y_2^j$ for some non-negative integers $i,j$ not both zero and some root of unity $\rho$.
\end{lemma}

\begin{proof} Let $\mu_i(x)$, $i=1,\ldots,r$, be the eigenvalues of $f(x I)$ in $\ov{\C(x)}$, that is, the roots of the polynomial $P_f(y_1,x)$ defined by~\eqref{eq:Pfg} as a polynomial in $y_1$. Similarly, let $\eta_j(x)$, $j=1,\ldots,r$, be the eigenvalues of $g(x I)$ in $\ov{\C(x)}$.

Assume that $R_{f,g}(y_1,y_2)$ has  a factor of one of the forbidden forms, say $y_1^iy_2^j-\rho$ for some non-negative integers $i,j$ not both zero and some root of unity $\rho$. We note that any point on the curve $R_{f,g}(y_1,y_2)=0$ is of the form $(\mu_k(t),\eta_\ell(t))$ for some $1\le k,\ell\le r$ and some $t\in\C$. Indeed, let $(t_1,t_2)\in\C^2$ be such that $R_{f,g}(t_1,t_2)=0$. Then, by definition of the resultant $R_{f,g}$, the two polynomials
$$\det(t_1 I-f(x I))=\prod_{i=1}^r(t_1-\mu_i(x)),\ \det(t_2 I-g(x I))=\prod_{i=1}^r(t_2-\eta_i(x))$$
have a common root $x=t\in \C$. This implies that $t_1=\mu_k(t)$ and $t_2=\eta_\ell(t)$ for some $k,\ell$. Since $y_1^iy_2^j-\rho$ is a factor of $R_{f,g}$, there are infinitely many $(t_1,t_2)\in\C^2$ which are roots of this factor, and thus we deduce that there are infinitely many $t\in\C$ such that
$$
\mu_k(t)^{i}\eta_\ell(t)^{j}=\rho
$$
for some $1\le k,\ell\le r$. 
Since $\mu_k$ and $\eta_\ell$ are algebraic functions, we conclude that $\mu_k(x)^{i}\eta_\ell(x)^{j}=\rho$, which contradicts our hypothesis. 

The case when $R_{f,g}(y_1,y_2)$ has a factor of the form $y_1^i-\rho y_2^j$ is treated entirely similar. 
\end{proof}

\subsection{Scalar specialisations}
\label{sec:scalar}
The main tool for the proof of Theorem~\ref{thm:LangMatr comm} is the following result which applies, again, to scalar matrices $\lambda I$, however for which the matrices $f(\lambda I)$ and $g(\lambda I)$ satisfy  different conditions than in Corollary~\ref{cor:eigen abs val 1}. More precisely, we have:
\begin{lemma}
\label{lem:eigen root 1}
Let $f,g\in \M_r(\C)[Z]$ be such that any eigenvalue of $f(x I)$ and any eigenvalue of $g(x I)$ are multiplicatively independent functions in $\ov{\C(x)}$. 
 Then there are at most 
$$
22 r^{5} (\deg f+\deg g)\deg f\cdot \deg g
$$
elements $\lambda\in\C$ such that 
$$
f(\lambda I)^n-I \mand g(\lambda I)^m-I
$$
are singular matrices for some $n,m\ge 1$.
\end{lemma}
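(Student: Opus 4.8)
The plan is to reduce to Theorem~\ref{thm:Lang} by means of the resultant $R_{f,g}$ introduced in~\eqref{eq:R}. First I would record the elementary linear-algebra fact that, for a fixed $\lambda\in\C$, the matrix $f(\lambda I)^n-I$ is singular for some $n\ge 1$ if and only if some eigenvalue of $f(\lambda I)$ is a root of unity: indeed $\det(f(\lambda I)^n-I)=0$ means that $1$ is an eigenvalue of $f(\lambda I)^n$, and the eigenvalues of $f(\lambda I)^n$ (with multiplicity) are exactly the $n$-th powers of the eigenvalues of $f(\lambda I)$. Since the eigenvalues of $f(\lambda I)$ are precisely the roots in $y_1$ of $P_f(\lambda,y_1)$ (see~\eqref{eq:Pfg}), and similarly for $g$, a parameter $\lambda\in\C$ has the property in the statement precisely when there exist roots of unity $\zeta_1,\zeta_2$ (of possibly different orders) with $P_f(\lambda,\zeta_1)=0$ and $P_g(\lambda,\zeta_2)=0$.

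Next, for any such $\lambda$ the univariate polynomials $P_f(x,\zeta_1)$ and $P_g(x,\zeta_2)$ share the common root $x=\lambda$, hence $R_{f,g}(\zeta_1,\zeta_2)=\Res_x(P_f(x,\zeta_1),P_g(x,\zeta_2))=0$; that is, $(\zeta_1,\zeta_2)$ is a torsion point on the plane curve $R_{f,g}(y_1,y_2)=0$. By the discussion preceding Lemma~\ref{lem:m dep cond} we have $R_{f,g}\ne 0$, and by Lemma~\ref{lem:m dep cond}(i) together with the hypothesis of the present lemma, $R_{f,g}$ has no factor of the form $y_1^iy_2^j-\rho$ or $y_1^i-\rho y_2^j$ with $\rho$ a root of unity. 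Therefore Theorem~\ref{thm:Lang} applies to $F=R_{f,g}$, and by the degree bound~\eqref{eq:deg R} the curve $R_{f,g}=0$ carries at most $11(\deg R_{f,g})^2\le 11r^2(\deg f+\deg g)^2$ torsion points.

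Finally I would bound, for each admissible torsion pair $(\zeta_1,\zeta_2)$, the number of $\lambda$ that can give rise to it. Such $\lambda$ are roots of $P_f(x,\zeta_1)\in\C[x]$, a polynomial in $x$ of degree at most $\deg_x P_f\le r\deg f$, and this polynomial is not identically zero: otherwise $\zeta_1$ would be an eigenvalue of $f(xI)$ for all $x$, i.e.\ one of the eigenvalue functions of $f(xI)$ would equal the constant root of unity $\zeta_1$, which is multiplicatively dependent with every eigenvalue of $g(xI)$, contradicting the hypothesis (symmetrically one may use $P_g(x,\zeta_2)$, giving at most $r\min(\deg f,\deg g)$ values of $\lambda$ per torsion pair; and if $\deg_x P_f=0$ or $\deg_x P_g=0$ the same observation shows there is no admissible $\lambda$ at all). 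Multiplying the two estimates and using $(\deg f+\deg g)\min(\deg f,\deg g)\le 2\deg f\cdot\deg g$ yields at most $22r^3(\deg f+\deg g)\deg f\cdot\deg g\le 22r^5(\deg f+\deg g)\deg f\cdot\deg g$ such $\lambda$, which is the claimed bound. The only genuinely delicate point is the non-vanishing of $P_f(x,\zeta_1)$ — this is exactly where the multiplicative independence hypothesis is needed, a constant eigenvalue equal to a root of unity being the single configuration that must be excluded; the rest is bookkeeping with resultants and degrees.
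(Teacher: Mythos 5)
Your proposal is correct and follows essentially the same route as the paper: reduce singularity of $f(\lambda I)^n-I$ and $g(\lambda I)^m-I$ to eigenvalues of $f(\lambda I)$ and $g(\lambda I)$ being roots of unity, view $(\zeta_1,\zeta_2)$ as a torsion point on the curve $R_{f,g}(y_1,y_2)=0$, invoke Lemma~\ref{lem:m dep cond}(i) with Theorem~\ref{thm:Lang} and the degree bound~\eqref{eq:deg R}, and bound the number of $\lambda$ per torsion pair through the non-vanishing of $P_f(x,\zeta_1)$ and $P_g(x,\zeta_2)$, which is exactly where the multiplicative independence hypothesis enters in the paper as well. The only divergence is bookkeeping: your count yields $22r^{3}(\deg f+\deg g)\deg f\cdot\deg g$, whereas the paper multiplies by an additional factor $r^{2}$ for the choice of indices $i,j$; your observation that this factor is unnecessary is sound, and relaxing to the stated $r^{5}$ bound is harmless.
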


\begin{proof}
We use a similar approach as for the proof of~\cite[Theorem 3]{AR}, reducing the problem to an application of Theorem~\ref{thm:Lang}.

Let $\lambda\in \C$ be such that $f(\lambda I)^n-I$ and $g(\lambda I)^m-I$
are singular matrices for some $n,m\ge 1$. This implies that $J_{f(\lambda I)}^n$ and $J_{g(\lambda I)}^m$, which are triangular matrices, have at least one element $1$ on the main diagonal, where $J_{f(\lambda I)}$ and $J_{g(\lambda I)}$ are Jordan normal forms of $f(\lambda I)$ and $g(\lambda I)$, respectively.

Let $u_{\lambda,i},v_{\lambda,j}\in\C$, $i,j=1,\ldots,r$, be the eigenvalues of $f(\lambda I),g(\lambda I)$, respectively, that is, $u_{\lambda,i}$ are the (not necessarily distinct) roots of the polynomial $P_f(\lambda,y_1)$ and $v_{\lambda,j}$ are the (not necessarily distinct) roots of the polynomial $P_g(\lambda,y_2)$, where $P_f(x,y_1)$ and $P_g(x,y_2)$ are defined by~\eqref{eq:Pfg}. Consequently, there exist $i,j\in\{1,\ldots,r\}$ such that $u_{\lambda,i}^n=1$ and $v_{\lambda,j}^m=1$,  that is, both $u_{\lambda,i}$ and $v_{\lambda,j}$ are roots of unity.

Notice 
that, since $P_f(\lambda,u_{\lambda,i})=P_g(\lambda,v_{\lambda,j})=0$, one also has $$R_{f,g}(u_{\lambda,i},v_{\lambda,j})=0$$ for all $i,j$, where $R_{f,g}$ is defined by~\eqref{eq:R}. Therefore, from the above discussion, there exist $i,j$ such that $(u_{\lambda,i},v_{\lambda,j})$ is a torsion point on the curve $R_{f,g}(y_1,y_2)=0$.

Since, by our hypothesis and Lemma~\ref{lem:m dep cond}, $R_{f,g}$ does not have any of the special factors mentioned in the statement of 
Theorem~\ref{thm:Lang}, 
it follows from Theorem~\ref{thm:Lang} and~\eqref{eq:deg R} that there are at most 
$$
11(\deg R_{f,g})^2\le 11 r^2 (\deg f+\deg g)^2
$$
torsion points $(\zeta_1,\zeta_2)$ on the curve $R_{f,g}(y_1,y_2)=0$. Each such point $(\zeta_1,\zeta_2)=(u_{\lambda,i},v_{\lambda,j})$ for some $i,j$ corresponds to at most $r\min(\deg f, \deg g)$ values of $\lambda$. Indeed, since $R_{f,g}(\zeta_1,\zeta_2)=0$,  $\lambda$ is a common root of the polynomials $P_f(x,\zeta_1),P_g(x,\zeta_2)$. We note that both polynomials $P_f(x,\zeta_1),P_g(x,\zeta_2)$ are non-zero, since, otherwise, $\zeta_1$ or $\zeta_2$ would be an   eigenvalue of $f(xI)$ or $g(xI)$, respectively. However, since $\zeta_1$ or $\zeta_2$  are roots of unity, this contradicts the multiplicative independence assumption on the eigenvalues of $f(xI)$ and $g(xI)$.

Taking the contribution from each $i,j\le r$, we conclude that there at most
$$
11 r^{5} (\deg f+\deg g)^2\min(\deg f,\deg g)\le 22r^{5}(\deg f+\deg g)\deg f\cdot \deg g
$$
 possibilities for such $\lambda\in \C$,  which concludes the proof.
\end{proof}

\begin{remark}
\label{rem:lang scalar}
It is worth mentioning that Lemma~\ref{lem:eigen root 1} is equivalent to the following reformulation:

Let $f,g\in \M_r(\C)[Z]$ be as in Lemma~\ref{lem:eigen root 1}. Then there are at most
$$
22 r^{5} (\deg f+\deg g)\deg f\cdot \deg g
$$
elements $\lambda\in \C$ such that
$f(\lambda I)$ and $g(\lambda I)$ have each at least one eigenvalue that is a root of unity.
\end{remark}

\begin{remark}
When $r=1$, the conditions in Corollary~\ref{cor:eigen abs val 1} and Lemma~\ref{lem:eigen root 1}  are equivalent  to the polynomials $f$ and $g$ being multiplicatively independent,  and, in this case, we recover Theorem~\ref{thm:Lang}.
\end{remark}

\subsection{On sums of roots of unity}

The main tool for the proof of Theorem~\ref{thm:lang deg 1} is the following result regarding linear relations of roots of unity, due initially to Mann~\cite{Mann} for equations over $\Q$ and further extended to linear equations with algebraic coefficients in~\cite[Theorem 1]{DZ} (see also previous work in~\cite{Zan95}). 

\begin{lemma}
\label{lem:sum roots 1}
Let $a_i\in\ov\Q^*$, $i=0,\ldots,s$. Then the equation
\begin{equation}
\label{eq:sum}
\sum_{i=1}^s a_iX_i=a_0
\end{equation}
has only finitely many non-degenerate solutions in roots of unity, that is, solutions in roots of unity $(\zeta_1,\ldots,\zeta_s)$ for which there is no proper  subsum in~\eqref{eq:sum} that vanishes.
\end{lemma}

It should be also noted, see~\cite[Theorem 1]{DZ}, that the number of non-degenerate solutions in roots of unity to the equation~\eqref{eq:sum} can be bounded only in terms of $s$ and the degree of the number field containing the coefficients $a_0,\ldots,a_s$.

\section{Proofs of main results}

\subsection{Proof of Theorem~\ref{thm:LangMatr comm}}
The proof follows as a simple application of Lemma~\ref{lem:eigen root 1}. Indeed, let $A\in \M_r(\C)$ be such that $A$ commutes with each of the coefficients of $f$ and $g$, and such that 
\begin{equation}
\label{eq:f g n m}
f(A)^n=I \mand g(A)^m=I
\end{equation}
for some $n,m\ge 1$. 

Using the commutativity assumption on $A$, simple computations show that there exist polynomials $Q_{n,A}, Q_{m,A}\in \M_r(\C)$ depending on $n,m$ and $A$, such that
\begin{eqnarray*}
&f(x I)^n-f(A)^n=Q_{n,A}(x I)(x I-A),\\
&g(x I)^m-g(A)^m=Q_{m,A}(x I)(x I-A).
\end{eqnarray*}
Therefore, using~\eqref{eq:f g n m}, we obtain that
$$
\det(x I-A) \mid \gcd\(\det(f(x I)^n-I), \det(g(x I)^m-I)\).
$$ 

We note that both polynomials $\det(f(x I)^n-I)$ and $\det(g(x I)^m-I)$ are non-zero. Indeed, assume, for example, that $\det(f(x I)^n-I)=0$. Then writing
$$
\det(f(x I)^n-I)=\prod_{i=1}^n \det(f(x I)-\zeta^iI),
$$
where $\zeta\in\C$ is an $n$-th root of unity, we conclude that $\det(f(x I)-\zeta^iI)=0$ for some $i=1,\ldots,n$. Thus $\zeta^i$ is an eigenvalue of $f(xI)$, and similarly for $g$. This contradicts our multiplicative independence assumption on the eigenvalues of $f(xI)$ and $g(xI)$.

Thus, every eigenvalue of $A$ is a root of the greatest common divisor above. In other words, for any eigenvalue $\lambda\in\C$ of $A$, the matrices $f(\lambda I)^n-I$ and $g(\lambda I)^m-I$ are singular. The conclusion now follows from Lemma~\ref{lem:eigen root 1}, that is, there are at most 
$$
L=22 r^{5} (\deg f+\deg g)\deg f\cdot \deg g
$$
possibilities for each of the eigenvalues of $A$. 

We partition now the set $\{1,\ldots,r\}$ into $k$ ordered parts, $1\le k\le r$, where each such part corresponds to a Jordan block of $A$, and thus to one eigenvalue $\lambda$.  The number of such partitions is $\binom{r-1}{k-1}$, and each set in a partition corresponds to at most $L$ values of $\lambda\in \C$. Summing over all $k$ we obtain at most
$$
\sum_{k=1}^r \binom{r-1}{k-1}L^k=L\sum_{k=0}^{r-1} \binom{r-1}{k}L^{k}=L(L+1)^{r-1}\le L^r(1+1/L)^{L/2}\le 2L^r
$$
possible Jordan normal forms, which concludes the proof.

\subsection{Proof of Theorem~\ref{thm:lang deg 1}}
We can write $C=VDV^{-1}$, where $V\in\GL_2(\C)$ and 
\begin{equation}
\label{eq:D}
D=\begin{pmatrix}
\mu_1 & 0\\
0 & \mu_2
\end{pmatrix} \qquad \textrm{or} \qquad D=\begin{pmatrix}
\mu & 1\\
0 & \mu
\end{pmatrix},
\end{equation}
where $\mu_1,\mu_2,\mu$ are the eigenvalues of $C$.

We look for torsion solutions $(A,B)\in\GL_2(\C)^2$ to the equation $Z_1+Z_2=C$. Since $A,B$ are torsion if and only if $V^{-1}AV$, $V^{-1}BV$ are as well, the problem is equivalent to looking at torsion solutions to the equation
\begin{equation}
\label{eq:xyd}
Z_1+Z_2=D,
\end{equation}
where $D$ has one of the forms in~\eqref{eq:D}.

We first remark that if $D$ is diagonal in~\eqref{eq:D}, and $\mu_1=\mu_2$, that is, $C=\mu_1\cdot I$, then (ii) follows directly from Remark~\ref{rem:scalar} or Theorem~\ref{thm:LangMatr comm} (since any specialisation will commute with $C=\mu_1\cdot I$). We give however a more elementary argument. Indeed, we look for torsion matrices $A$ having eigenvalues $\lambda_1,\lambda_2$ that are roots of unity, such that $\mu_1-\lambda_1$ and $\mu_1-\lambda_2$ are also roots of unity. However, there are finitely many such roots of unity $\lambda_1$ and $\lambda_2$ if and only if $\mu_1\ne 0$. Indeed,  let $\lambda_1$ be a root of unity such that $\eta_1=\mu_1-\lambda_1$ is also a root of unity, that is, $\lambda_1+\eta_1=\mu_1$. If $\mu_1\ne 0$, 
%by~\cite[Theorem 1]{DZ} (see also~\cite{Zan95})  there are finitely 
there are at most two solutions $(\lambda_1,\eta_1)$ in roots of unity (and same discussion applies to $\lambda_2$) corresponding to the intersection of the two unit  circles in $\C$ centred at $0$ and $\mu_1$, proving (ii).  %If $\mu_1=0$ then clearly there are infinitely many such solutions, proving (ii).  

Therefore, from now on, if $D$ is diagonal as above, we assume that $\mu_1\ne \mu_2$, and we proceed with proving (i).

%In this case, since $f$ can be seen as a polynomial in $\C[Z]$, by the spectral theorem of eigenvalues,  the eigenvalues of $f(A)$ are  $f(\lambda_i)$, where $\lambda_i$, $i=1,2$, are the eigenvalues of $A$. Therefore, we reduce the problem to the classical Lang's problem~\cite{Lang, Zan} (see also~\cite[Section~4.1]{BS}), which leads to finitely many eigenvalues of $A$ and $B$ in roots of unity (since $\mu_1\ne 0$), and thus, finitely many matrices $A$ up to conjugacy and same for $B$. Therefore, from now on, if $D$ is diagonal as above, we assume that $\mu_1\ne \mu_2$.

\smallskip

$(\Longrightarrow)$ We assume first that $\Tr(C)$  is not the sum of at most two roots of unity, and we want to show that, up to conjugacy, there are finitely many torsion solutions to~\eqref{eq:xyd}. Let  $(A,B)$ be such a solution, and let $\Spec(A)=\{\lambda_1,\lambda_2\}$ and $\Spec(B)=\{\eta_1,\eta_2\}$. Taking the trace of~\eqref{eq:xyd}, we obtain the following equation in roots of unity:
\begin{equation}
\label{eq:eq roots 1}
\lambda_1+\lambda_2+\eta_1+\eta_2=\Tr(C).
\end{equation}

If $\Tr(C)$ is not an algebraic number,~\eqref{eq:eq roots 1} has no solution $(\lambda_1,\lambda_2,\eta_1,\eta_2)$ in roots of unity, therefore from now on we assume $\Tr(C)$ to be algebraic over $\Q$. Moreover, if $\Tr(C)=0$, then it is the sum of two roots of unity, say $1$ and $-1$, which contradicts our assumption. Therefore, we also have that $\Tr(C)\ne 0$.

We can apply now Lemma~\ref{lem:sum roots 1} to conclude that there are finitely many non-degenerate solutions $(\lambda_1,\lambda_2,\eta_1,\eta_2)$ in roots of unity to~\eqref{eq:eq roots 1}, that is, for which there is no vanishing subsum. Therefore such solutions lead to finitely many torsion matrices $A$ up to conjugacy and the same for $B$.

We consider now the possible vanishing subsums in~\eqref{eq:eq roots 1}, which up to symmetry, are as follows:
\begin{itemize}
\item[(i)] $\lambda_1=\Tr(C)$ and $\lambda_2+\eta_1+\eta_2=0$ (same discussion applies if $\lambda_1$  is replaced by $\lambda_2$; if $\lambda_1$ is replaced by any of $\eta_1$ or $\eta_2$, same discussion applies with $A$ interchanged with $B$).

\item[(ii)] $\lambda_1+\eta_1=\Tr(C)$ and $\lambda_2+\eta_2=0$ 
(same discussion applies if $\lambda_1+\eta_1$ is replaced by any combination $\lambda_i+\eta_j$ with $i,j\in \{1,2\}$).

\item[(iii)] $\lambda_1+\lambda_2=\Tr(C)$ and $\eta_1+\eta_2=0$ 
(if $\lambda_1+\lambda_2$  is replaced by $\eta_1+\eta_2$, same discussion applies with $A$ interchanged with $B$).
\end{itemize}

However, in all these cases we note that $\Tr(C)$ is the sum of at most two roots of unity, which contradicts our assumption. This concludes the proof of this implication.

\smallskip 

$(\Longleftarrow)$ We  assume now that the torsion solutions to~\eqref{eq:xyd} are contained in finitely many pairs of conjugacy classes. We want to show that $\Tr(C)$  is not the sum of at most two roots of unity. It is enough to construct examples when $\Tr(C)$ is a root of unity or the sum of two roots of unity that lead to infinitely many nonsimilar torsion matrices $A,B$ that satisfy~\eqref{eq:xyd}.

{\it Example1:} Assume $\Tr(C)$ is a root of unity. Let $\lambda_1,\lambda_2,\eta_1,\eta_2$ be roots of unity that satisfy
$$
\lambda_1=\Tr(C) \mand \lambda_2+\eta_1+\eta_2=0
$$
(that is, we are in case (i) above).

Therefore $\lambda_1$ is uniquely defined, and the second equation above implies that $$(-\eta_1/\lambda_2,-\eta_2/\lambda_2)\in \{e^{\pm \pi i/3},e^{\mp \pi i/3}\}.$$
However, if one varies $\lambda_2$ over all roots of unity, one can construct matrices $A$ with $\Spec(A)=\{\lambda_1,\lambda_2\}$ and $B$ with $\Spec(B)=\{\eta_1,\eta_2\}$ satisfying the above system. Indeed, let $\lambda_2\ne \lambda_1$ be any root of unity, $\eta_1=-e^{\pi i/3}\lambda_2$ and $\eta_2=-e^{-\pi i/3}\lambda_2$.

If $D$ in~\eqref{eq:D} is diagonal, the matrix
\begin{equation}
\label{eq:A1}
A=\begin{pmatrix}
\lambda_1+\lambda_2-d & d(\lambda_1+\lambda_2-d)-\lambda_1\lambda_2\\
1 & d
\end{pmatrix},
\end{equation}
where $$d=\frac{\lambda_2^2-\lambda_1\lambda_2-\mu_1\mu_2+(\lambda_1+\lambda_2)\mu_2}{\mu_2-\mu_1},$$ 
satisfies $\Spec(A)=\{\lambda_1,\lambda_2\}$ and  $B=C-A$ satisfies $\Spec(B)=\{\eta_1,\eta_2\}$. As $\lambda_2$ varies over all roots of unity, we obtain infinitely many matrices $A$ and $B$ which are not similar.

Similarly, if $D$ in~\eqref{eq:D} has the second Jordan form with eigenvalue $\mu$, then one can construct
$$
A=\begin{pmatrix}
2\mu & 0\\
\lambda_2^2+\mu^2-\mu \lambda_2 & \lambda_2
\end{pmatrix}.
$$
For such $A$ one has $\Spec(A)=\{\lambda_1,\lambda_2\}$ and  $B=C-A$ satisfies $\Spec(B)=\{\eta_1,\eta_2\}$.

\smallskip

{\it Example 2:} Assume $\Tr(C)$ is a sum of two roots of unity. If $\Tr(C)=0$ (which also falls in this case), then~\eqref{eq:eq roots 1} becomes
$$
\lambda_1+\lambda_2+\eta_1+\eta_2=0,
$$
which clearly has infinitely many torsion solutions $(\lambda_1,\lambda_2,\eta_1,\eta_2)$, and thus one can construct infinitely many non-similar torsion matrices $A$ and $B$.

We give now a construction when $\Tr(C)\ne 0$. Let $\lambda_1,\lambda_2,\eta_1,\eta_2$ be roots of unity that satisfy
$$
\lambda_1+\eta_1= \Tr(C) \mand \lambda_2+\eta_2=0
$$
(that is, we are in case (ii) above).

From the first equation $\lambda_1+\eta_1=\Tr(C)$, since  $\Tr(C)\ne 0$, we have at most two solutions $(\lambda_1, \eta_1)$ in roots of unity satisfying this equation. Let us fix one such torsion pair $(\lambda_1,\eta_1)$ such that $\lambda_1+\eta_1=\Tr(C)$.

However, as above, if one varies $\lambda_2$ over all roots of unity, one can construct matrices $A$ with $\Spec(A)=\{\lambda_1,\lambda_2\}$ and $B$ with $\Spec(B)=\{\eta_1,\eta_2\}$ satisfying the above system. Indeed,  let $\lambda_2\ne \lambda_1$ be any root of unity.

If $D$ in~\eqref{eq:D} is diagonal, let $A$ be defined by~\eqref{eq:A1}, where $$d=\frac{-\Tr(C)\lambda_2-\mu_1\mu_2+(\lambda_1+\lambda_2)\mu_2}{\mu_2-\mu_1}.$$ 
Then one has $\Spec(A)=\{\lambda_1,\lambda_2\}$ and  $B=C-A$ satisfies $\Spec(B)=\{\eta_1,\eta_2\}$. As $\lambda_2$ varies over all roots of unity, we obtain again infinitely many matrices $A$ and $B$ which are not similar.

Similarly, if $D$ in~\eqref{eq:D} has the second Jordan form with eigenvalue $\mu$, then one can construct
$$
A=\begin{pmatrix}
\lambda_1 & 0\\
-\mu^2+\mu(\lambda_1-\lambda_2) & \lambda_2
\end{pmatrix}.
$$
For such $A$ one has $\Spec(A)=\{\lambda_1,\lambda_2\}$ and  $B=C-A$ satisfies $\Spec(B)=\{\eta_1,\eta_2\}$, and thus one obtains infinitely many such non-similar matrices.

\smallskip

Similar construction can be made to create examples for the case (iii) above. This concludes the proof.

\subsection{Proof of Corollary~\ref{cor:Z^d+c}}
If $d=1$, then this is exactly one of the implications of the statement of Theorem~\ref{thm:lang deg 1} with $A$ replaced by $-U$ therein. Let $d\ge 2$. Then we look for torsion specialisations $U,B\in\GL_2(\C)$ such that
$$
-U^d+B=C.
$$
By Theorem~\ref{thm:lang deg 1}, there are finitely many pairs of conjugacy classes $(\cA,\cB)$ such that any torsion solution $(A,B)$ to $Z_1+Z_2=C$ belongs to one of these pairs, and thus, there are finitely many $U$, up to conjugacy, as well. This concludes the proof.

\section{Final comments}

We note that the strategy of the proof of Theorem~\ref{thm:lang deg 1} can also be used to study torsion solutions of the equation~\eqref{eq:xyd} in higher dimension $r\ge 3$ as well, however one would have to consider all possible vanishing subsums to a linear equation in $2r$ roots of unity, which becomes quickly very complicated. We would also like to extend Theorem~\ref{thm:LangMatr comm} and Corollary~\ref{cor:Z^d+c} to specialisations of arbitrary nonlinear polynomials $f,g\in\M_r(\C)$, obtaining a full analogue of Theorem~\ref{thm:Lang} for matrix polynomials.

\smallskip
We also ask a rather vague question towards obtaining a full matrix analogue of Theorem~\ref{thm:Lang} for torsion points on plane curves.

\begin{question}
\label{quest}
Let $F\in\M_r(\C)[Z_1,Z_2]$. Under what conditions on $F$ are there only finitely many pairs of conjugacy classes in $\GL_r(\C)^2$ containing all the  torsion solutions to the equation $F(Z_1,Z_2)=0$?
\end{question}

\smallskip
In Theorem~\ref{thm:LangMatr comm} and Theorem~\ref{thm:lang deg 1}, we look at matrices $A\in \M_r(\C)$ such that all the eigenvalues of $f(A)$ and $g(A)$ are roots of unity. 
It is also interesting to study the more general case when all the eigenvalues of $f(A)$ and $g(A)$ are of absolute value one, rather than just roots of unity. This, then, would be an analogue of Theorem~\ref{thm:PakShp} and would extend Corollary~\ref{cor:eigen abs val 1} to non-scalar matrices. 
\section*{Acknowledgement}
The author is very grateful to Julian Lawrence Demeio for noticing a gap in a main result of the previous version of the paper, and for follow-up discussions on this topic. The author also thanks Harry Schmidt for noticing this issue.

The author is also very grateful to Daniel Raoul Perez, Zeev Rudnick, Igor Shparlinski and Umberto Zannier for many discussions and useful comments on  preliminary versions of the paper. The author thanks Umberto Zannier for suggesting to consider Theorem~\ref{thm:lang deg 1}.

The author  was partially supported by the Australian Research Council Grant DP200100355. The author also gratefully acknowledges the hospitality and generosity of the Max Planck Institute of Mathematics (which fully supported the author for 11 months in 2020, during a very challenging Covid-19 time), where this project was initiated.

\end{document}